\theoremstyle{plain}
\newtheorem{theorem}{Theorem}
\newtheorem{corollary}[theorem]{Corollary}
\newtheorem{lemma}[theorem]{Lemma}
\theoremstyle{definition}
\newtheorem{definition}[theorem]{Definition}
\newtheorem{example}[theorem]{Example}
\theoremstyle{remark}
\begin{document}
    
    \author{Hieu D. Nguyen}
    \title{A Mixing of Prouhet-Thue-Morse Sequences \\
    and Rademacher Functions}
    \date{5-27-2014}

    \address{Department of Mathematics, Rowan University, Glassboro, NJ 08028.}
    \email{nguyen@rowan.edu}
    
    \subjclass[2010]{Primary 11, Secondary 94}
    \keywords{Prouhet-Thue-Morse sequence}
    
    \maketitle

    \begin{abstract}
        A novel generalization of the Prouhet-Thue-Morse sequence to binary $\pm 1$-weight sequences is presented. Derived from Rademacher functions, these weight sequences are shown to satisfy interesting orthogonality and recurrence relations.  In addition, a result useful in describing these weight sequences as sidelobes of Doppler tolerant waveforms in radar is established.
    \end{abstract} 

\section{Introduction}
Let $v(n)$ denote the binary sum-of-digits residue function, i.e. the sum of the digits in the binary expansion of $n$ modulo 2.  For example, $v(7)=v(111_2)=3$ mod 2 = 1.  Then it is well known that $v(n)$ defines the classical Prouhet-Thue-Morse (PTM) integer sequence, which can easily be shown to satisfy the recurrence
\begin{align*}
v(0) &=0 \\
v(2n) &=v(n) \\
v(2n+1) &=1-v(n)
\end{align*}
The first few terms of $v(n)$ are $0,1,1,0,1,0,0,1$.  Observe that the PTM sequence can also be generated by starting with the value 0 and recursively appending a negated copy of itself (bitwise):
\[0 \rightarrow 01 \rightarrow 0110 \rightarrow 01101001 \rightarrow ...
\]
Another method is to iterate the morphism $\mu$ defined on the alphabet $\{0,1\}$ by the substitution rules $\mu(0)= 01$ and $\mu(1)= 10$ and applied to $x_0=0$ as described in \cite{AS}:
\begin{align*}
x_1 & =\mu(x_0)=01 \\
x_2 & = \mu^2(x_0)=\mu(x_1)=0110 \\
x_3 & = \mu^3(x_0)=\mu(x_2)=01101001 \\
...
\end{align*}

This {\em ubiquitous} sequence, coined as such by Allouche and Shallit in \cite{AS}, first arose in the works of three mathematicians: E. Prouhet involving equal sums of like powers in 1851 (\cite{P}), A. Thue on combinatorics of words in 1906 (\cite{T}), and M. Morse in differential geometry in 1921 (\cite{Mo}).  It has found interesting applications in many areas of mathematics, physics, and engineering: combinatorial game theory (\cite{AS},\cite{P}), fractals (\cite{ASk},\cite{MH}), quasicrystals (\cite{MM},\cite{RSL}) and more recently Dopper tolerant waveforms in radar (\cite{CPH},\cite{PCMH},\cite{NC}).

Suppose we now replace the 0's and 1's in the PTM sequence with 1's and $-1$'s, respectively.  Then it is easy to prove that this yields an equivalent binary $\pm 1$-sequence $w(n)$ satisfying the recurrence
\begin{align*}
w(0) &=1 \\
w(2n) &=w(n) \\
w(2n+1) &=-w(n),
\end{align*}
where $w(n)$ and $v(n)$ are related by
\begin{equation} \label{eq:wv}
w(n)=1-2v(n),
\end{equation}
or equivalently,
\begin{equation} \label{eq:wv2}
w(n)=(-1)^{v(n)}.
\end{equation}

Of course, $v(n)$ can be generalized to any modulus $p\geq 2$.  Towards this end, we define $v_p(n)$ to be the sum of the digits in the base-$p$ expansion of $n$ modulo $p$.  We shall call $v_p(n)$ the mod-$p$ PTM integer sequence.  Then $v_p(n)$ satisfies the recurrence
\begin{align*}
v_p(0) &=0 \\
v_p(pn+r) &=(v(n)+r)_p
\end{align*}
where $(m)_p \equiv m$ mod $p$.  More interestingly, it is well known that $v_p(n)$ provides a solution to the famous Prouhet-Tarry-Escott (PTS) problem (\cite{P},\cite{L},\cite{W}): given a positive integer $M$, find $p$ mutually disjoint sets of non-negative integers $S_0$, $S_1$,...,$S_{p-1}$ so that
\[
\sum_{n\in S_0} n^m = \sum_{n\in S_1} n^m = ... = \sum_{n\in S_{p-1}} n^m
\]
for $m=1,...,M$.  The solution, first given by Prouhet \cite{P} and later proven by Lehmer \cite{L} (see also Wright \cite{W}), is to partition the integers $\{0,1,...,p^{M+1}-1\}$ so that $n \in S_{v_p(n)}$.  For example, if $M=3$ and $p=2$, then 
the two sets $S_0=\{0,3,5,6,9,10,12,15\}$ and $S_1=\{1,2,4,7,8,11,13,14\}$ defined by Prouhet's algorithm solve the PTS problem:
\begin{align*}
60 & = 0+3+5+6+9+10+12+15 \\
 & = 1+2+4+7+8+11+13+14 \\
620 & = 0^2+3^2+5^2+6^2+9^2+10^2+12^2+15^2 \\
& = 1^2+2^2+4^2+7^2+8^2+11^2+13^2+14^2 \\
7200 & = 0^3+3^3+5^3+6^3+9^3+10^3+12^3+15^3 \\
& = 1^3+2^3+4^3+7^3+8^3+11^3+13^3+14^3
\end{align*}

In this paper, we address the following question: what is the natural generalization of $w(n)$ to modulus $p\geq 2$?  Which formula should we look to extend, (\ref{eq:wv}) or (\ref{eq:wv2})?   Is there any intuition behind our generalization?  One answer is to define $w_p(n)$ by merely replacing $v(n)$ with $v_p(n)$ in say  (\ref{eq:wv2}).  However, to discover a more satisfying answer, we consider a modified form of (\ref{eq:wv2}):
\begin{equation} \label{eq:wv-digit}
w(n)=(-1)^{d_{1-v(n)}}
\end{equation}
Here, $d_{1-v(n)}$ takes on one of two possible values, $d_0=1$ or $d_1=0$, which we view as the first two digits in the binary expansion (base 2) of the number 1, i.e. $1=d_12^1+d_02^0$.  Thus, formula (\ref{eq:wv-digit}) involves the digit opposite in position to $v(n)$.  

To explain how this formula naturally generalizes to any positive modulus $p\geq 2$, we begin our story with two arbitrary elements $a_0$ and $a_1$.  Define $A=(a_n)=(a_0,a_1,...)$ to be what we call a mod-2 PTM sequence generated from $a_0$ and $a_1$, where the elements of $A$ satisfy the aperiodic condition
\[
a_n=a_{v(n)}
\]
Thus, $A=(a_0,a_1,a_1,a_0,a_1,a_0,a_0,a_1,...)$.  Since formula (\ref{eq:wv-digit}) holds, it follows that $a_n$ can be decomposed as
\begin{equation} \label{eq:decomposition-mod2}
a_n=\frac{1}{2}(a_0+a_1)+\frac{1}{2}w(n)(a_0-a_1)
\end{equation}
In some sense, $w(n)$ plays the same role as $v(n)$ in defining the sequence $A$, but through the decomposition (\ref{eq:decomposition-mod2}).
We argue that formula (\ref{eq:decomposition-mod2}) leads to a natural generalization of $w(n)$.  For example, suppose $p=3$ and
consider the mod-3 PTM sequence $A=(a_0,a_1,a_2,...)$ generated by three elements $a_0,a_1,a_2$ so that $a_n=a_{v_3(n)}$.  The following decomposition generalizes (\ref{eq:decomposition-mod2}):
\begin{align*}
a_n & =\frac{1}{4}w_0(n)(a_0+a_1+a_2)+\frac{1}{4}w_1(n)(a_0+a_1-a_2) \\
& \ \ \ \ +\frac{1}{4}w_2(n)(a_0-a_1+a_2)+\frac{1}{4}w_3(n)(a_0-a_1-a_2)
\end{align*}
Here, $w_0(n),w_1(n),w_2(n),w_3(n)$ are $\pm 1$-sequences that we shall call the weights of $a_n$. Since $a_n=a_{v_3(n)}$, these weights are fully specified once their values are known for $n=0,1,2$.  It is straightforward to verify in this case that $W(n)=(w_0(n),...,w_3(n))$ takes on the values
\begin{align*}
W(0) & =(1,1,1,1) \\
W(1) & =(1,1,-1,-1) \\
W(2) & =(1,-1,1,-1)
\end{align*}
Thus, the weights $w_i(n)$ are a natural generalization of $w(n)$.  

More generally, if $p\geq 2$ is a positive integer and $A=(a_n)$ is a mod-$p$ PTM sequence generated from $a_0,a_1,...,a_{p-1}$, i.e. $a_n=a_{v_p(n)}$, then the following decomposition holds:
\begin{equation} \label{eq:aB}
a_n=\frac{1}{2^{p-1}} \sum_{i=0}^{2^{p-1}-1} w_i^{(p)}(n) B_i
\end{equation}
Here, the weights $w_i^{(p)}$ are given by
\begin{equation} \label{eq:wi}
w_i(n):=w_i^{(p)}(n)=(-1)^{d^{(i)}_{p-1-v_p(n)}}
\end{equation}
for $0\leq i \leq 2^{p-1}-1$ and $i=d_{p-2}^{(i)}2^{p-2}+...+d_1^{(i)} 2^1+d_0^{(i)}2^0$ is its binary expansion.  Moreover, $B_i$ is calculated by the formula
\begin{equation} \label{eq:B-Rademacher}
B_i=\sum_{n=0}^{p-1}w_i(n) a_{n}
\end{equation}
Observe that we can extend the range for $i$ to $2^p-1$ (and will do so), effectively doubling the number of weights by expanding $i$.
In that case we find that
\[
w_i(n)=-w_{2^p-1-i}(n).
\]
With this extension, we demonstrate in Theorem \ref{th:recurrence}  that each $w_i(n)$ satisfies the recurrence
\[
w_i(pn+r)=w_{x_r(i)}(n)w_i(n)
\]
where $x_r(i)$ denotes a quantity that we define in Section 4 as the `xor-shift' of $i$ by $r$, where $0\leq x_r(i) \leq 2^p-1$.  For example, if $p=2$, we find that
\begin{align*}
w_1(2n) &=w_0(n)w_1(n) \\
w_1(2n+1) &=w_3(n)w_1(n)
\end{align*}
Since $w_0(n)=1$ and $w_3(n)=-1$ for all $n$, this yields the same recurrence satisfied by $w(n)=w_1(n)$ as described in the beginning of this section.

Next, we note that the set of values $R(n)=(w_0(n),...,w_{2^p-1}(n))$ represent those given by the Rademacher functions $\phi_n(x)$, $n=0,1,2,...$, defined by (see \cite{R}, \cite{F})
\begin{align*}
\phi_0(x) & =
1 \ \ (0 \leq x < 1/2) & \phi_0(x+1) & =\phi_0(x)  \\
\phi_0(x) & =-1 \ \ (1/2 \leq x < 1) &
\phi_n(x) & =\phi_0(2^kx)
\end{align*}
In particular,
\[w_i(n)=\phi_n(i/2^p)
\]
so that the right-hand side of (\ref{eq:aB}) can be thought of as a discrete {\em Rademacher} transform of $(B_0,B_1,...,B_{2^{p-1}-1})$.  Moreover, formula (\ref{eq:B-Rademacher}) can be viewed as the inverse transform, which follows from the fact that the Rademacher functions form an orthogonal set.   Thus, weight sequences can be viewed as a mixing of Prouhet-Thue-Morse sequences and Rademacher functions.

It is known that the Rademacher functions generate the Walsh functions, which have important applications in communications and coding theory (\cite{B},\cite{Tz}).  Walsh functions are those of the form (see \cite{F}, \cite{Wa})
\[
\psi_m(x)=\phi_{n_k}(x)\phi_{n_{k-1}}(x)\cdot \cdot \cdot \phi_{n_1}(x)
\]
where $m=2^{n_k} + 2^{n_{k-1}}+...+2^{n_1}$ with $n_i < n_{i+1}$ and $0\leq m \leq 2^p-1$.  This allows us to generalize our weights $w_i(n)$ to sequences
\[
\tilde{w}_i(m)=w_i(n_k)\cdot \cdot \cdot w_i(n_1)
\]
which we view as a discrete version of the Walsh functions in the variable $i$.  In that case, we prove in Section 3 that
\[
\sum_{i=0}^{2^p-1} \tilde{w}_i(m)B_i=\begin{cases}
a_n, & \text{if } m=2^n, 0\leq n \leq p-1 \\
0, & \text{otherwise}
\end{cases}
\]
We also prove in the same section a result that was used in \cite{NC} to characterize these weight sequences as sidelobes of Doppler tolerant radar waveforms (motivated by \cite{CPH} and \cite{PCMH}).

\section{The Prouhet-Thue-Morse Sequence}

Denote by $S(L)$ to be the set consisting of the first $L$ non-negative integers $0,1,...,L-1$.

\begin{definition}
Let $n=n_1n_2...n_k$ be the base-$p$ representation of a non-negative integer $n$.  We define the {\em mod-$p$ sum-of-digits function} $v_p(n) \in \mathbb{Z}_p$ to be the sum of the digits $n_i$ modulo $p$, i.e.
\[
v_p(n)\equiv \sum_{i=1}^k n_i \mod p
\] 
\end{definition}

\noindent Observe that $v_p(n)=n$ if $0\leq n < p$.

\begin{definition}
We define a sequence $A=(a_0,a_1,...)$ to be a {\em mod-$p$ Prouhet-Thue-Morse (PTM)} sequence if it satisfies the aperiodic condition
\[
a_n=a_{v_p(n)}
\]
\end{definition}

\begin{definition} Let $p$ and $M$ be positive integers and set $L=p^{M+1}$.  We define $\{S_0,S_1,...,S_{p-1}\}$ to be a {\em Prouhet-Thue-Morse} (PTM) $p$-{\em block partition} of $S(L)=\{0,1,...,L-1\}$ as follows: if $v_p(n)=i$, then
\[
n\in S_i
\]
\end{definition}

The next theorem solves the famous Prouhet-Tarry-Escott problem.

\begin{theorem}[\cite{P}, \cite{L}, \cite{W}] \label{th:gptm-sequence}
Let $p$ and $M$ be positive integers and set $L=p^{M+1}$.  Suppose $\{S_0,S_1,...,S_{p-1}\}$ is a PTM $p$-block partition of $S(L)=\{0,1,...,L-1\}$.  Then
\[
P_m:=\sum_{n\in S_0} n^m = \sum_{n\in S_1} n^m = ... = \sum_{n\in S_{p-1}} n^m
\]
for $m=1,...,M$.  We shall refer to $P_m$ as the $m$-th Prouhet sum corresponding to $p$ and $M$.
\end{theorem}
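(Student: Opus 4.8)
The plan is to use a roots-of-unity filter together with the multiplicative structure of $v_p$ on base-$p$ digits. Fix a primitive $p$-th root of unity $\zeta=e^{2\pi i/p}$. The key observation is that the indicator of the event $v_p(n)=i$ can be written as $\frac{1}{p}\sum_{j=0}^{p-1}\zeta^{j(v_p(n)-i)}$, so that for each $m$,
\[
\sum_{n\in S_i} n^m = \frac{1}{p}\sum_{j=0}^{p-1}\zeta^{-ij}\,T_j^{(m)},\qquad T_j^{(m)}:=\sum_{n=0}^{L-1}\zeta^{j\,v_p(n)}\,n^m.
\]
Thus it suffices to show that $T_j^{(m)}=0$ for every $j=1,\dots,p-1$ and every $m=1,\dots,M$; once this is established, the right-hand side collapses to $\frac{1}{p}T_0^{(m)}$, which is manifestly independent of $i$, giving the claimed equality with $P_m=\frac{1}{p}T_0^{(m)}$.

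To analyze $T_j^{(m)}$ I would first encode it through the generating polynomial $\Phi_j(x):=\sum_{n=0}^{L-1}\zeta^{j\,v_p(n)}\,x^n$. Writing $n=\sum_{s=0}^{M}d_s p^s$ in base $p$, we have $v_p(n)\equiv\sum_s d_s\pmod p$, hence $\zeta^{j\,v_p(n)}=\prod_s \zeta^{j d_s}$, and summing over all $n\in\{0,\dots,p^{M+1}-1\}$ factors the polynomial as
\[
\Phi_j(x)=\prod_{s=0}^{M}\Bigl(\sum_{d=0}^{p-1}\zeta^{jd}\,x^{d p^s}\Bigr).
\]
Since $1\le j\le p-1$, we have $\zeta^{j}\neq 1$, so evaluating each factor at $x=1$ gives $\sum_{d=0}^{p-1}\zeta^{jd}=0$. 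Therefore every one of the $M+1$ factors vanishes at $x=1$, and $\Phi_j(x)$ has a zero of order at least $M+1$ there; equivalently $(x-1)^{M+1}$ divides $\Phi_j(x)$.

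Finally I would relate the power sums to derivatives of $\Phi_j$. Because $\bigl(x\tfrac{d}{dx}\bigr)x^n=n\,x^n$, iterating gives $T_j^{(m)}=\bigl(x\tfrac{d}{dx}\bigr)^m\Phi_j(x)\big|_{x=1}$. The operator $\bigl(x\tfrac{d}{dx}\bigr)^m$ expands as $\sum_{k=1}^{m} S(m,k)\,x^k\frac{d^k}{dx^k}$ for suitable (Stirling) coefficients $S(m,k)$, so its value at $x=1$ is a linear combination of $\Phi_j^{(k)}(1)$ for $1\le k\le m$. Since $\Phi_j$ vanishes to order at least $M+1$ at $x=1$ and $m\le M<M+1$, all these derivatives vanish, whence $T_j^{(m)}=0$. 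Combining with the filter identity of the first step yields the theorem.

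The main obstacle is the bookkeeping in the order-of-vanishing step: one must verify that it is the number of digit positions, $M+1$, that controls the order of the zero of $\Phi_j$ at $x=1$, and that this strictly exceeds the largest power $m\le M$ appearing in the differential operator. This off-by-one margin is exactly what confines the result to $m=1,\dots,M$ and would break at $m=M+1$. A secondary point requiring care is justifying that one may commute the roots-of-unity filter past the differential operator; since everything in sight is a finite polynomial identity, this is routine but should be noted. If one prefers to avoid complex roots of unity entirely, the same factorization can be run with formal indeterminates tracking the digit residues, or one can argue by induction on $M$ using the recursion $v_p(pn+r)=(v_p(n)+r)_p$; I expect the generating-function argument above to be the most transparent.
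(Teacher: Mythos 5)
The paper does not actually prove this theorem: it states it as a known classical result with citations to Prouhet, Lehmer, and Wright, and proceeds directly to prove only the corollary that follows it. So your proposal is not competing with an in-paper argument; it stands alone, and it is correct and complete. Your route --- the roots-of-unity filter reducing everything to showing $T_j^{(m)}=0$ for $1\le j\le p-1$, the digit-wise factorization $\Phi_j(x)=\prod_{s=0}^{M}\bigl(\sum_{d=0}^{p-1}\zeta^{jd}x^{dp^s}\bigr)$, the observation that each of the $M+1$ factors vanishes at $x=1$ so that $(x-1)^{M+1}$ divides $\Phi_j$, and extraction of power sums via $\bigl(x\tfrac{d}{dx}\bigr)^m=\sum_{k=1}^{m}S(m,k)\,x^k\tfrac{d^k}{dx^k}$ --- is essentially the classical generating-function proof found in the cited literature and in standard modern expositions of Prouhet's theorem. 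The details all check: the filter identifies $v_p(n)=i$ with $v_p(n)\equiv i \pmod p$ legitimately because both quantities lie in $\{0,\dots,p-1\}$; the factorization uses $\zeta^p=1$ to pass from $v_p(n)$ (a residue) to the honest digit sum $\sum_s d_s$ in the exponent; and the off-by-one margin you flag, $m\le M<M+1$, is exactly where the hypothesis $L=p^{M+1}$ enters and why the conclusion stops at $m=M$. The inductive alternative you sketch at the end, based on the recursion $v_p(pn+r)=(v_p(n)+r)_p$, would be more in the spirit of the paper's own recursive treatment of its weight sequences, but your generating-function argument is the more transparent and is the natural proof to splice in where the paper currently defers to references.
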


\begin{corollary}
Let $A=(a_0,a_1,...,a_{L-1})$ be a mod-$p$ PTM sequence of length $L=p^{M+1}$, where $M$ is a non-negative integer.  Then
\begin{equation}  \label{eq:sum-An}
\sum_{n=0}^{L-1}n^m a_n = P_m (a_0+a_1+...+a_{p-1})
\end{equation}
for $m=0,...,M$.
\end{corollary}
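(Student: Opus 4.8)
The plan is to regroup the sum $\sum_{n=0}^{L-1} n^m a_n$ according to the PTM $p$-block partition $\{S_0,\dots,S_{p-1}\}$ of $S(L)$ and then exploit the defining property of a mod-$p$ PTM sequence to pull the constants $a_i$ out of each block-sum. Since every index $n \in \{0,\dots,L-1\}$ lies in exactly one block $S_i$, namely the one with $i=v_p(n)$, I can write
\[
\sum_{n=0}^{L-1} n^m a_n = \sum_{i=0}^{p-1} \sum_{n \in S_i} n^m a_n.
\]
First I would observe that if $n \in S_i$ then $v_p(n)=i$, so the aperiodic condition $a_n=a_{v_p(n)}$ forces $a_n=a_i$, a constant on each block. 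Hence $a_i$ factors out of the inner sum, giving $\sum_{n=0}^{L-1} n^m a_n = \sum_{i=0}^{p-1} a_i \sum_{n \in S_i} n^m$.

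Next, for $1 \le m \le M$ I would invoke Theorem~\ref{th:gptm-sequence}, which asserts precisely that $\sum_{n\in S_i} n^m = P_m$ is independent of $i$. Substituting this identity makes the inner sum the same constant $P_m$ for every $i$, so it pulls out of the outer sum and leaves $P_m \sum_{i=0}^{p-1} a_i = P_m(a_0+a_1+\dots+a_{p-1})$, which is the claimed formula~\eqref{eq:sum-An}.

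The one step not covered directly by Theorem~\ref{th:gptm-sequence} is the case $m=0$, and I expect this to be the only genuine obstacle. Here $\sum_{n \in S_i} n^0 = |S_i|$ (using the convention $0^0=1$), so I must first show that each block has the same cardinality, i.e.\ that $P_0:=|S_i|$ is independent of $i$. This is a counting argument in base $p$: writing each $n \in \{0,\dots,p^{M+1}-1\}$ with $M+1$ base-$p$ digits, for any fixed choice of the top $M$ digits there is exactly one value of the remaining digit making the digit sum congruent to a prescribed residue modulo $p$; hence each residue class $S_i$ contains precisely $p^M$ elements. With $P_0=p^M$ established, the identical factoring argument settles the $m=0$ case and completes the proof.
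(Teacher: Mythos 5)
Your proof is correct and follows essentially the same route as the paper: decompose the sum over the PTM $p$-block partition, use the aperiodic condition $a_n=a_{v_p(n)}=a_i$ on each block $S_i$ to factor out the constants, and invoke Theorem~\ref{th:gptm-sequence} to replace every block sum by the common value $P_m$. Where you go beyond the paper is the $m=0$ case. The paper's proof applies the identity $\sum_{n\in S_i}n^m=P_m$ uniformly for $m=0,\dots,M$, even though Theorem~\ref{th:gptm-sequence} is stated only for $m=1,\dots,M$; the fact that all blocks have equal cardinality, $|S_0|=\cdots=|S_{p-1}|=p^M$ (which is exactly what the $m=0$ instance requires, under the convention $0^0=1$), is left implicit. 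Your base-$p$ counting argument---fix the top $M$ digits of $n$ and note that exactly one choice of the remaining digit produces each prescribed residue of the digit sum modulo $p$---supplies precisely this missing step, so on this point your write-up is more complete than the paper's own proof.
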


\begin{proof} We have
\begin{align*}
\sum_{n=0}^{L-1}n^m a_n & = \sum_{n\in S_0} n^m a_{v_p(n)} + \sum_{n\in S_1} n^m a_{v_p(n)} + ...+  \sum_{n\in S_{p-1}} n^m a_{v_p(n)} \\
&  = a_0\sum_{n\in S_0} n^m + a_1\sum_{n\in S_1} n^m + ...+ a_{p-1} \sum_{n\in S_{p-1}} n^m \\
& = P_m (a_0+a_1+...+a_{p-1})
\end{align*}
\end{proof}

\section{Weight Sequences}
In this section we develop a generalization of the PTM $\pm1$-sequence $w(n)$ and derive orthogonality and recurrence relations for these generalized sequences that we refer to as {\em weight} sequences.

\begin{definition}
Let $i=d_{p-1}^{(i)}2^{p-1}+d_{p-2}^{(i)}2^{p-2}+...+d_1^{(i)} 2^1+d_0^{(i)}2^0$ be the binary expansion of $i$, where $i$ is a non-negative integer with $0\leq i \leq 2^p-1$.  Define $w_0(n),w_1(n),...w_{2^p-1}(n)$ be binary $\pm 1$-sequences defined by
\[
w_i(n)=(-1)^{d_{p-1-v_p(n)}^{(i)}}
\]
\end{definition}

\begin{example} Let $p=3$.  Then
\begin{align*}
w_0(n) & = (\mathbf{1, 1, 1}, 1, 1, 1, 1, 1, 1,...)  \\
w_1(n) & = (\mathbf{1, 1, -1}, 1, -1, 1, -1, 1, 1,...) \\
w_2(n) & = (\mathbf{1, -1, 1}, -1, 1, 1, 1, 1, -1,...)  \\
w_3(n) & = (\mathbf{1, -1, -1}, -1, -1, 1, -1, 1, -1,...) \\
w_4(n) & = (\mathbf{-1, 1, 1}, 1, 1, -1, 1, -1, 1,...) \\
w_5(n) & =  (\mathbf{-1, 1, -1}, 1, -1, -1, -1, -1, 1,...) \\
w_6(n) & =  (\mathbf{-1, -1, 1}, -1, 1, -1, 1, -1, -1,...) \\
w_7(n) & =  (\mathbf{-1, -1, -1}, -1, -1, -1, -1, -1, -1,...)
\end{align*}
\end{example}
\noindent Observe that the first three values of each weight$w_i(n)$ (displayed in bold) represent the binary value of $i$ if we replace 1 and $-1$ with 0 and 1, respectively.  Morever, we have the following symmetry:

\begin{lemma} \label{le:w-reverse}
For $i=0,1,...,2^p-1$, we have
\[
w_i(n)=-w_{2^p-1-i}(n)
\]
\end{lemma}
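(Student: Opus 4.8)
The plan is to reduce everything to the definition of $w_i(n)$ in terms of the complementary binary digit, and then observe that the map $i \mapsto 2^p - 1 - i$ is precisely bitwise complementation on the $p$-bit expansion. The heart of the matter is that flipping every digit $d_j^{(i)}$ turns $(-1)^{d_j^{(i)}}$ into $(-1)^{1 - d_j^{(i)}} = -(-1)^{d_j^{(i)}}$, and the argument comes down to verifying that the digit in position $p-1-v_p(n)$ behaves this way under the complement.

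First I would set $j := p - 1 - v_p(n)$, so that by definition $w_i(n) = (-1)^{d_j^{(i)}}$ and $w_{2^p-1-i}(n) = (-1)^{d_j^{(2^p-1-i)}}$. Since $0 \leq v_p(n) \leq p-1$, the index $j$ lies in $\{0, 1, \dots, p-1\}$, so it is a legitimate bit position in the $p$-bit binary expansion. The key structural fact I would then establish is the digit identity
\[
d_j^{(2^p - 1 - i)} = 1 - d_j^{(i)} \qquad (0 \leq j \leq p-1).
\]
This is exactly the statement that subtracting a $p$-bit number $i$ from $2^p - 1 = \underbrace{11\cdots1}_{p}{}_2$ complements each of its bits, which holds because $2^p - 1 = \sum_{j=0}^{p-1} 2^j$ has every digit equal to $1$, and the subtraction carries nowhere:
\[
2^p - 1 - i = \sum_{j=0}^{p-1} 2^j - \sum_{j=0}^{p-1} d_j^{(i)} 2^j = \sum_{j=0}^{p-1} (1 - d_j^{(i)}) 2^j.
\]

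Once the digit identity is in hand, the conclusion is immediate: I would substitute it into the exponent to get
\[
w_{2^p-1-i}(n) = (-1)^{d_j^{(2^p-1-i)}} = (-1)^{1 - d_j^{(i)}} = -(-1)^{d_j^{(i)}} = -w_i(n),
\]
and rearranging gives $w_i(n) = -w_{2^p-1-i}(n)$, as claimed. I do not expect any genuine obstacle here; the only point requiring a little care is justifying that the coefficients $1 - d_j^{(i)}$ in the final display really are the binary \emph{digits} of $2^p - 1 - i$, i.e. that each lies in $\{0,1\}$ and that the representation is the unique $p$-bit one. That is clear because $d_j^{(i)} \in \{0,1\}$ forces $1 - d_j^{(i)} \in \{0,1\}$, and $0 \leq 2^p - 1 - i \leq 2^p - 1$ guarantees the resulting expansion is again a valid $p$-bit expansion, so the $d_j^{(2^p-1-i)}$ are unambiguously determined.
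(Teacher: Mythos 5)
Your proof is correct and follows essentially the same route as the paper: both arguments rest on the observation that $i \mapsto 2^p-1-i$ is bitwise complementation of the $p$-bit expansion, so the digit $d_{p-1-v_p(n)}^{(i)}$ flips and the sign of $(-1)^{d_{p-1-v_p(n)}^{(i)}}$ reverses. Your write-up is in fact slightly more careful than the paper's, since you justify the complementation identity (no borrows, coefficients stay in $\{0,1\}$) rather than merely asserting it.
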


\begin{proof} If $i=d_{p-1}^{(i)}2^{p-1}+d_{p-2}^{(i)}2^{p-2}+...+d_0^{(i)}2^0$, then $j=2^p-1-i$ has expansion
\[
j=\bar{d}_{p-1}^{(j)}2^{p-1}+\bar{d}_{p-2}^{(j)}2^{p-2}+...+\bar{d}_0^{(j)}2^0
\]
where $\bar{d}_k^{(j)}=1-d_k^{(i)}$.  It follows that
\[
w_i(n)=(-1)^{d^{(i)}_{p-1-v_p(n)}} = (-1)^{1-d^{(j)}_{p-1-v_p(n)}}=-w_{2^p-1-i}(n)
\]
\end{proof}

\begin{theorem} \label{th:W-orthogonal}
Let $p\geq 2$ be a positive integer.  Then the vectors $W_p(0),W_p(1),..,W_p(p-1)$ defined by
\[
W_p(n)=(w_0^{(p)}(n),w_1^{(p)}(n),...,w_{2^{p-1}-1}^{(p)}(n))
\]
form an orthogonal set, i.e.
\[
W_p(n)\cdot W_p(m) = \sum_{i=0}^{2^{p-1}-1} w_i(n)w_i(m)= 2^{p-1} \delta_{n-m} =
\begin{cases}
2^{p-1}, & n = m \\
0, & n\neq m
\end{cases}
\]
for $0\leq n,m \leq p-1$.  Here, $\delta_n$ is the Kronecker delta function.
\end{theorem}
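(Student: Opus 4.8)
The plan is to exploit the range restriction $0\le n\le p-1$, under which $v_p(n)=n$, so that the defining formula collapses to $w_i(n)=(-1)^{d^{(i)}_{p-1-n}}$. In other words, for $n$ in this range the value $w_i(n)$ depends on $i$ only through the single binary digit of $i$ in position $p-1-n$. I would therefore identify the index set $\{0,1,\dots,2^{p-1}-1\}$ with the bit-strings $(d^{(i)}_0,\dots,d^{(i)}_{p-2})\in\{0,1\}^{p-1}$, recording that the top digit $d^{(i)}_{p-1}$ vanishes throughout this truncated range since $i<2^{p-1}$.

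First I would observe that this makes each map $i\mapsto w_i(n)$ a character of the group $(\mathbb{Z}/2)^{p-1}$: for $n=0$ the relevant digit is the always-zero top digit, so $w_i(0)\equiv 1$ is the trivial character, while for $1\le n\le p-1$ the map $i\mapsto(-1)^{d^{(i)}_{p-1-n}}$ is the coordinate character reading off position $p-1-n$. Since the positions $p-1-n$ for $n=1,\dots,p-1$ are exactly $p-2,\dots,0$ and hence distinct, the assignment $n\mapsto\chi_n$ is injective on $\{0,1,\dots,p-1\}$ and yields $p$ pairwise distinct characters: the trivial one together with the $p-1$ coordinate characters.

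The summand $w_i(n)w_i(m)$ is then the value at $i$ of the product character $\chi_n\chi_m$, which (each factor being $\pm1$-valued) is trivial precisely when $\chi_n=\chi_m$, i.e. precisely when $n=m$. Summing over the full index set, orthogonality of characters gives $2^{p-1}$ when $n=m$ and $0$ otherwise, which is the claim. If a self-contained count is preferred, I would instead note directly that as $i$ ranges over $\{0,\dots,2^{p-1}-1\}$ any fixed digit is $0$ and $1$ equally often, and any two distinct digits are jointly uniform over the four sign patterns; hence a $\pm1$ sum governed by one digit (the case $n=0\ne m$) or by the XOR of two distinct digits (the case $1\le n<m$) cancels to $0$, while the diagonal $n=m$ contributes $w_i(n)^2=1$ for every $i$.

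The only real subtlety, and where I would take care, is the degenerate role of $n=0$: because the digit in position $p-1$ is forced to $0$ over the truncated range $i\le 2^{p-1}-1$, the column $w_i(0)$ is identically $1$ rather than a genuine coordinate character. I would verify that this does not spoil orthogonality by noting that the trivial character remains distinct from every coordinate character, so for each off-diagonal pair at least one of $n,m$ feeds a nontrivial coordinate character into the product $\chi_n\chi_m$, keeping it nontrivial and forcing the sum to vanish. Everything else reduces to routine character (equivalently, Hadamard-row) orthogonality.
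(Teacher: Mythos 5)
Your proof is correct, and while it rests on the same underlying cancellation as the paper's, the packaging is genuinely different and in one respect tighter. Both arguments start from the collapse $v_p(n)=n$ for $0\le n\le p-1$, so that $w_i(n)=(-1)^{d^{(i)}_{p-1-n}}$ reads off a single bit of $i$. The paper then argues by hand: after disposing of $p=2$ by direct check, for $n<m$ it freezes all digits of $i$ except those in positions $k(n)=p-1-n$ and $k(m)=p-1-m$ and notes that the four completions $(d^{(i)}_{k(n)},d^{(i)}_{k(m)})\in\{0,1\}^2$ contribute $\sum(-1)^{d^{(i)}_{k(n)}+d^{(i)}_{k(m)}}=0$. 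You instead view $i\mapsto w_i(n)$ as a character of $(\mathbb{Z}/2)^{p-1}$ (the trivial character for $n=0$, a coordinate character for $1\le n\le p-1$), observe that these $p$ characters are pairwise distinct, and invoke orthogonality of characters; your ``self-contained count'' fallback is essentially the paper's pairing argument. What your route buys is exactly the point you flag: when $n=0$ the relevant digit $d^{(i)}_{p-1}$ is pinned to $0$ over the truncated range $i\le 2^{p-1}-1$, so the paper's assertion that all four completions occur is not literally available in that case --- only $(0,0)$ and $(0,1)$ arise, and one must instead pair on the single free digit $d^{(i)}_{k(m)}$. The conclusion is unaffected, but the paper's written proof glosses over this degenerate case, whereas your treatment of the trivial character absorbs it cleanly (trivial times a coordinate character is still nontrivial, hence still sums to zero); your argument also needs no separate $p=2$ check.
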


\begin{proof} It is straightforward to check that the lemma is true for $p=2$.  Thus, we assume $p\geq 3$ and define $k(n)=p-1-n$ so that
\[
W_p(n)\cdot W_p(m)= \sum_{i=0}^{2^{p-1}-1} (-1)^{d^{(i)}_{k(n)}+d^{(i)}_{k(m)}}
\]
Assume $n\neq m $ and without loss of generality, take $n<m$ so that $k(n)>k(m)$.  Assume $0\leq i \leq 2^{p-1}-1$ and expand $i$ in binary so that
\[
i=d^{(i)}_{p-1}2^{p-1}+...+d^{(i)}_{k(n)}2^{k(n)}+...+d^{(i)}_{k(m)}2^{k(m)}+...+d^{(i)}_02^0
\]
where $d_{p-1}^{(i)}=0$.  Suppose in specifying $i$ we fix the choice of values for all binary digits except for $d^{(i)}_{k(n)}$ and $d^{(i)}_{k(m)}$.  Then the set $S=\{(0,0), (0,1),(1,0), (1,1)\}$ consists of the four possibilities for choosing these two remaining digits, which we express as the ordered pair $d=(d^{(i)}_{k(n)},d^{(i)}_{k(m)})$.  But then the contribution from this set of four such values for $i$ sums to zero in the dot product $W_p(n)\cdot W_p(m)$, namely
\[
\sum_{d\in S}(-1)^{d^{(i)}_{k(n)}+d^{(i)}_{k(m)}}=0
\]
Since this holds for all cases in specifying $i$, it follows that $W_p(n)\cdot W_p(m)=0$ as desired.  On the other hand, if $n=m$, then $k(n)=k(m)$ and so $d^{(i)}_{k(n)}=d^{(i)}_{k(m)}$ for all $i$.  It follows that
\[
W_p(n)\cdot W_p(m)=\sum_{i=0}^{2^{p-1}-1} (-1)^{2d^{(i)}_{k(n)}}=\sum_{i=0}^{2^{p-1}-1}1=2^{p-1}
\] 
\end{proof}

In fact, we have the more general result, which states a discrete version of the fact that the Walsh functions form an orthogonal set.

\begin{theorem} \label{th:w-tilde}
Let $m$ be an integer and expand $m=2^{n_k} + 2^{n_{k-1}}+...+2^{n_1}$ in binary with $n_i < n_{i+1}$ and $0\leq m \leq 2^p-1$.  Define
\[
\tilde{w}_i(m)=w_i(n_k)\cdot \cdot \cdot w_i(n_1)
\]
for $i=0,1,...,2^p-1$.  Then
\begin{equation} \label{eq:w-tilde}
\sum_{i=0}^{2^{p-1}-1} \tilde{w}_i(m)=0
\end{equation}
for all $m=0,1,....,2^p-1$.
\end{theorem}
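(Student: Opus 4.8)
The plan is to imitate the proof of Theorem~\ref{th:W-orthogonal} by first collapsing the product $\tilde{w}_i(m)$ into a single sign and then cancelling the indices $i$ in pairs. First I would use that $v_p(n_j)=n_j$ for every exponent $0\le n_j\le p-1$ (each is less than $p$), so that $w_i(n_j)=(-1)^{d^{(i)}_{p-1-n_j}}$. Writing $P=\{\,p-1-n_1,\dots,p-1-n_k\,\}$ for the set of reflected bit positions --- these are distinct because the $n_j$ are distinct --- the definition of $\tilde{w}_i(m)$ gives $\tilde{w}_i(m)=(-1)^{\sum_{\ell\in P} d^{(i)}_\ell}$. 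Hence the quantity to evaluate is $\sum_{i=0}^{2^{p-1}-1}(-1)^{\sum_{\ell\in P} d^{(i)}_\ell}$, a sum of $\pm1$ signs taken over all binary strings $(d^{(i)}_0,\dots,d^{(i)}_{p-2})$ of length $p-1$, with the leading digit $d^{(i)}_{p-1}$ pinned to $0$ because $i\le 2^{p-1}-1$.

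Next I would exhibit the cancellation. The cleanest route is to pick a position $\ell_0\in P$ lying in the free range $\{0,1,\dots,p-2\}$ and pair each index $i$ with $i'=i\oplus 2^{\ell_0}$, the index obtained by flipping bit $\ell_0$. This is a fixed-point-free involution on $\{0,\dots,2^{p-1}-1\}$ that preserves every digit except $d^{(\cdot)}_{\ell_0}$, which it toggles; since $\ell_0\in P$, toggling $d^{(\cdot)}_{\ell_0}$ reverses the sign of $(-1)^{\sum_{\ell\in P}d^{(\cdot)}_\ell}$, so $\tilde{w}_i(m)+\tilde{w}_{i'}(m)=0$ for each pair and the whole sum vanishes. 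Equivalently, and more transparently, the sum factors as $\prod_{\ell=0}^{p-2}\bigl(\sum_{d_\ell\in\{0,1\}}(-1)^{[\ell\in P]\,d_\ell}\bigr)$, in which every factor indexed by a position $\ell\in P$ equals $1+(-1)=0$; this is exactly the analogue of the two-digit cancellation $\sum_{d\in S}(-1)^{d_{k(n)}+d_{k(m)}}=0$ used in Theorem~\ref{th:W-orthogonal}, now carried out one digit at a time.

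The step I expect to be the main obstacle --- and the point that must be argued with care --- is the existence of a usable position $\ell_0\in P\cap\{0,\dots,p-2\}$. Because the leading digit $d^{(i)}_{p-1}$ is held fixed at $0$ over the summation range $0\le i\le 2^{p-1}-1$, the position $p-1$ contributes only a constant factor and cannot drive any cancellation. Thus the factorization delivers $0$ precisely when some reflected position $p-1-n_j$ falls in $\{0,\dots,p-2\}$, i.e. when some binary digit satisfies $n_j\ge 1$, i.e. when $m\ge 2$. I would therefore isolate the two genuinely degenerate values: for $m=0$ the product is empty and every $\tilde{w}_i(0)=1$, while for $m=1$ one has $n_1=0$, so $P=\{p-1\}$ reduces to the pinned digit and every $\tilde{w}_i(1)=w_i(0)=1$; in both cases $\sum_{i=0}^{2^{p-1}-1}\tilde{w}_i(m)=2^{p-1}$ rather than $0$. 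Handling these two exceptional values --- by excluding them or recording their value separately --- is the crux of the argument, whereas once $m\ge 2$ is assumed the single-digit factorization makes the vanishing immediate.
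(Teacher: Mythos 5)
Your proof is correct wherever the theorem itself is correct, and it takes a genuinely different route from the paper. The paper argues by induction on $k$, the number of powers of $2$ in $m$: the base case $k=1$ pairs each index $i$ with $d_q^{(i)}=0$ against an index $j$ with $d_q^{(j)}=1$ (where $q=p-1-v_p(n_1)$), and the inductive step splits the sum according to the value of $d_{q_k}^{(i)}$ and invokes the hypothesis for $m-2^{n_k}$. You instead collapse $\tilde{w}_i(m)$ into the single sign $(-1)^{\sum_{\ell\in P}d_\ell^{(i)}}$ and dispose of all $k$ at once, either by the involution $i\mapsto i\oplus 2^{\ell_0}$ or, equivalently, by factoring the sum digit by digit. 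What your method buys is exactly the observation in your final paragraph, which the paper's induction obscures: cancellation requires a toggleable digit, i.e.\ a position of $P$ lying among the $p-1$ free bits.

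Indeed, the theorem as printed is false for $m=0$ and $m=1$, precisely as you say: $\tilde{w}_i(0)$ is an empty product, and $\tilde{w}_i(1)=w_i(0)=(-1)^{d_{p-1}^{(i)}}=1$ for every $0\le i\le 2^{p-1}-1$ because the top digit is pinned to $0$ over the summation range; in both cases the sum is $2^{p-1}$, not $0$. (This is visible in the paper's own $p=3$ table, where $w_0(0)=w_1(0)=w_2(0)=w_3(0)=1$.) The paper's base case contains exactly the hidden assumption you flagged: the asserted pairing of digit values at position $q$ exists only when $q\le p-2$, i.e.\ when $n_1\ge 1$, so the argument silently excludes $m=1$ while the statement claims all $m=0,1,\dots,2^p-1$. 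Your restriction to $m\ge 2$ is therefore not an artifact of your approach but a necessary correction to the statement. Note also that the defect propagates: the paper's proof of (\ref{eq:w-tilde-B}) invokes the vanishing of $\sum_i\tilde{w}_i(m-2^{n_j})$, and $m-2^{n_j}$ can equal $1$ (for instance $m=3$), so that argument requires the same repair (there the cancellation must instead come from Lemma \ref{le:w-reverse} and the full index range $0\le i\le 2^p-1$ appearing in its statement).
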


\begin{proof} Let $m=2^{n_k} + 2^{n_{k-1}}+...+2^{n_1}$.  We argue by induction on $k$, i.e. the number of distinct powers of $2$ in the binary expansion of $m$.  Suppose $k=1$ and define $q=p-1-v_p(n_1)$.  Then given any value of $i$ where the binary digit $d_{q}^{(i)}=0$, there exists a corresponding value $j$ whose binary digit $d_{q}^{(j)}=1$.  It follows that
\begin{align*}
\sum_{i=0}^{2^{p-1}-1} \tilde{w}_i(m) & =\sum_{\substack {i=0 \\ d_{q}^{(i)} =0}}^{2^{p-1}-1} (-1)^{d_{q}^{(i)}} +\sum_{\substack {i=0 \\ d_{q}^{(i)} =1}} ^{2^{p-1}-1} (-1)^{d_{q}^{(i)}} \\
& = 2^{p-2}-2^{p-2} = 0
\end{align*}
Next, assume that (\ref{eq:w-tilde}) holds for all $m$ with $k-1$ distinct powers of 2.  Define $q_k=p-1-v_p(n_k)$.  Then for $m$ with $k$ distinct powers of 2, we have
\begin{align*}
\sum_{i=0}^{2^{p-1}-1} \tilde{w}_i(m) & = \sum_{i=0}^{2^{p-1}-1}(-1)^{d_{q_k}^{(i)}+d_{q_{k-1}}^{(i)}+...+d_{q_1}^{(i)}} \\
& = (-1)^0\sum_{\substack {i=0 \\ d_{q_k}^{(i)} =0}}^{2^{p-1}-1} (-1)^{d_{q_{k-1}}^{(i)}+...+d_{q_1}^{(i)}} +(-1)^1 \sum_{\substack {i=0 \\ d_{q_k}^{(i)} =1}} ^{2^{p-1}-1} (-1)^{d_{q_k}^{(i)}+d_{q_{k-1}}^{(i)}+...+d_{q_1}^{(i)}} \\
& = \frac{1}{2}\sum_{i=0}^{2^{p-1}-1} (-1)^{d_{q_{k-1}}^{(i)}+...+d_{q_1}^{(i)}} -\frac{1}{2} \sum_{i=0} ^{2^{p-1}-1} (-1)^{d_{q_k}^{(i)}+d_{q_{k-1}}^{(i)}+...+d_{q_1}^{(i)}} \\
& = \frac{1}{2}\cdot 0 - \frac{1}{2} \cdot 0 = 0
\end{align*}
\end{proof}

In \cite{Ri}, Richman observed that the classical PTM sequence $v(i)$ (although he did not recognize it by name in his paper) can be constructed from the product of all Radamacher functions up to order $p-1$, where $0 \leq i \leq 2^p-1$.  This result easily follows from our formulation of weight sequences since
\begin{align*}
\tilde{w}_i^{(p)}(2^{p}-1) & =w_i^{(p)}(0)w_i^{(p)}(1)\cdot \cdot \cdot w_i^{(p)}(p-1) \\
& = (-1)^{d_{p-1}^{(i)}+d_{p-2}^{(i)}+...+d_{0}^{(i)}} \\
& = v(i)
\end{align*}

Next, we relate weight sequences with PTM sequences.  Since $w_i(n)=-w_{p-1-i}(n)$ from Lemma \ref{le:w-reverse}, the following lemma is immediate.

\begin{lemma} \label{le:weights} Let $A=(a_0,a_1,...)$ be a mod-$p$ PTM sequence.  Define
\[
B_i=\sum_{n=0}^{p-1}w_i(n)a_n
\]
for $i=0,1,...,2^{p}-1$.  Then
\[
B_i(n)=-B_{2^p-1-i}(n)
\]
\end{lemma}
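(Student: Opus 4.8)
The plan is to reduce the claim directly to the pointwise symmetry established in Lemma \ref{le:w-reverse}, namely $w_i(n)=-w_{2^p-1-i}(n)$, valid for every $i$ with $0\le i\le 2^p-1$ and every $n$. Since each $B_i$ is obtained from the weight values $w_i(n)$ through a single linear combination with the fixed coefficients $a_0,\dots,a_{p-1}$, a sign flip in the weights at every position $n$ ought to propagate linearly to a sign flip in $B_i$. No structural property of the PTM sequence $A$ is needed beyond the finiteness of the generating block $a_0,\dots,a_{p-1}$, so I do not expect to invoke the aperiodic condition $a_n=a_{v_p(n)}$ at all.

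Concretely, I would begin from the definition of $B_{2^p-1-i}$, substitute $w_{2^p-1-i}(n)=-w_i(n)$ term by term inside the finite sum over $n=0,\dots,p-1$, and then extract the common factor $-1$ from the sum. This yields
\[
B_{2^p-1-i}=\sum_{n=0}^{p-1}w_{2^p-1-i}(n)\,a_n=\sum_{n=0}^{p-1}\bigl(-w_i(n)\bigr)a_n=-\sum_{n=0}^{p-1}w_i(n)\,a_n=-B_i,
\]
which is precisely the asserted identity. (The stray argument $n$ appearing in the statement $B_i(n)=-B_{2^p-1-i}(n)$ should be read as a typographical artifact, since $B_i$ carries no dependence on $n$ once the summation has been carried out.)

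The only point that genuinely deserves a check — and it is the sole technical content — is that the reflected index $2^p-1-i$ remains in the admissible range $0\le 2^p-1-i\le 2^p-1$ as $i$ ranges over $0,\dots,2^p-1$, so that Lemma \ref{le:w-reverse} legitimately applies to each term; this is immediate from the monotonicity of the map $i\mapsto 2^p-1-i$. Accordingly, I anticipate no real obstacle: the lemma is an immediate linear corollary of the weight symmetry, exactly as the surrounding text announces, and requires neither induction nor case analysis.
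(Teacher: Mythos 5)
Your proof is correct and follows exactly the route the paper intends: the paper gives no separate argument, simply remarking that the lemma is ``immediate'' from Lemma \ref{le:w-reverse}, and your term-by-term substitution of $w_{2^p-1-i}(n)=-w_i(n)$ into the finite sum is precisely that immediate argument spelled out (your reading of the stray ``$(n)$'' in the statement as a typographical artifact is also right).
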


\begin{theorem} The following equation holds for all $n\in \mathbb{N}$:
\begin{equation} \label{eq:weights} 
a_n=\frac{1}{2^{p-1}} \sum_{i=0}^{2^{p-1}-1} w_i(n) B_i
\end{equation}
\end{theorem}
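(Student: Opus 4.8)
The plan is to reduce the identity to the base range $0 \le n \le p-1$ and then to unfold the definition of $B_i$ and invoke the orthogonality of the weight vectors established in Theorem~\ref{th:W-orthogonal}.

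First I would observe that both sides of (\ref{eq:weights}) depend on $n$ only through $v_p(n)$. On the left this is precisely the mod-$p$ PTM aperiodic condition $a_n = a_{v_p(n)}$. On the right, each weight $w_i(n) = (-1)^{d^{(i)}_{p-1-v_p(n)}}$ depends on $n$ only through $v_p(n)$, and since $v_p(v_p(n)) = v_p(n)$ for every $n$ (because $v_p(n) \in \{0,\dots,p-1\}$, on which $v_p$ acts as the identity), we have $w_i(n) = w_i(v_p(n))$. Hence it suffices to establish (\ref{eq:weights}) for $n = 0, 1, \dots, p-1$, in which case $v_p(n) = n$.

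Next I would substitute the definition $B_i = \sum_{m=0}^{p-1} w_i(m) a_m$ into the right-hand side and interchange the two finite sums:
\[
\frac{1}{2^{p-1}} \sum_{i=0}^{2^{p-1}-1} w_i(n) B_i = \frac{1}{2^{p-1}} \sum_{m=0}^{p-1} a_m \left( \sum_{i=0}^{2^{p-1}-1} w_i(n) w_i(m) \right).
\]
For $0 \le n, m \le p-1$ the inner sum is exactly the dot product $W_p(n) \cdot W_p(m)$, which by Theorem~\ref{th:W-orthogonal} equals $2^{p-1} \delta_{n-m}$. Thus every term with $m \neq n$ vanishes, the surviving term $m = n$ contributes $2^{p-1} a_n$, and the prefactor $2^{-(p-1)}$ cancels this normalization, leaving $a_n$ as required.

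The argument is short once the orthogonality relation is in hand, so the only genuine obstacle is conceptual rather than computational: recognizing that the identity need only be verified on the base block $\{0,\dots,p-1\}$, and that after this reduction the definition of $B_i$ in (\ref{eq:B-Rademacher}) together with (\ref{eq:weights}) are simply a discrete transform and its inverse, paired through Theorem~\ref{th:W-orthogonal}. No induction or case analysis on the binary digits of $i$ is needed; the entire content is the orthogonality already proved.
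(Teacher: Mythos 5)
Your proof is correct and follows essentially the same route as the paper's: reduce to $n \in \{0,\dots,p-1\}$ via the PTM condition, substitute the definition of $B_i$, interchange the sums, and apply the orthogonality relation of Theorem~\ref{th:W-orthogonal}. If anything, your argument is slightly more careful than the paper's, since you explicitly verify that the right-hand side also depends on $n$ only through $v_p(n)$ (via $w_i(n) = w_i(v_p(n))$), a point the paper leaves implicit.
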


\begin{proof} Since $a_n=a_{v(n)}$ for a PTM sequence, it suffices to prove (\ref{eq:weights}) for $n=0,1,...,p-1$.  It follow from Theorem \ref{th:W-orthogonal} that 
\begin{align*}
\frac{1}{2^{p-1}} \sum_{i=0}^{2^{p-1}-1} w_i(n) B_i 
& = \frac{1}{2^{p-1}} \sum_{i=0}^{2^{p-1}-1} w_i(n) \left( \sum_{m=0}^{p-1}w_i(m)a_m \right) \\
& = \frac{1}{2^{p-1}}\sum_{m=0}^{p-1} \left( \sum_{i=0}^{2^{p-1}-1} w_i(n)w_i(m) \right) a_{m} \\
& = \frac{1}{2^{p-1}}\sum_{m=0}^{p-1} 2^{p-1}\delta_{n-m} a_{m} \\
& = a_n
\end{align*}
\end{proof}

\noindent NOTE: Because of the lemma above, we shall refer to $w_0(n),w_1(n),...,w_{2^{p-1}-1}(n)$ as the PTM weights of $a_n$ with respect to the basis of sums $(B_0,B_1,...,B_{2^{p-1}-1})$.

\begin{example} \

\noindent 1. $p=2$:
\begin{align*}
B_0 & = a_0+a_1 \\
B_1 & = a_0-a_1
\end{align*}

\noindent 2. $p=3$:
\begin{align*}
B_0 & = a_0+a_1+a_2 \\
B_1 & = a_0+a_1-a_2 \\
B_2 & = a_0-a_1+a_2 \\
B_3 & = a_0-a_1-a_2
\end{align*}
\end{example}

\begin{theorem} For $0\leq m \leq 2^p-1$, we have
\begin{equation} \label{eq:w-tilde-B}
\sum_{i=0}^{2^p-1} \tilde{w}_i(m)B_i=\begin{cases}
a_n, & \text{if } m=2^n, 0\leq n \leq p-1 \\
0, & \text{otherwise}
\end{cases}
\end{equation}
\end{theorem}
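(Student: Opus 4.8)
The plan is to substitute the definition \eqref{eq:B-Rademacher} of $B_i$ into the left-hand side and interchange the order of summation, reducing everything to an inner sum over $i$ of products of weights. Writing $B_i=\sum_{n=0}^{p-1}w_i(n)a_n$ gives
\[
\sum_{i=0}^{2^p-1}\tilde{w}_i(m)B_i=\sum_{n=0}^{p-1}a_n\left(\sum_{i=0}^{2^p-1}\tilde{w}_i(m)w_i(n)\right),
\]
so the whole statement hinges on evaluating $T(m,n):=\sum_{i=0}^{2^p-1}\tilde{w}_i(m)w_i(n)$.

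First I would establish a \emph{toggling identity}. Since every weight satisfies $w_i(n)^2=1$, multiplying $\tilde{w}_i(m)=w_i(n_k)\cdots w_i(n_1)$ by the single extra factor $w_i(n)$ either appends the index $n$ (when the bit $2^n$ is absent from $m$) or cancels it (when $2^n$ is present). In both cases $\tilde{w}_i(m)w_i(n)=\tilde{w}_i(m\oplus 2^n)$, where $m\oplus 2^n$ is the binary XOR; this stays inside the $p$-bit range because $0\le n\le p-1$. Hence $T(m,n)=\sum_{i=0}^{2^p-1}\tilde{w}_i(m\oplus 2^n)$.

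Next I would evaluate this collapsed sum using Theorem \ref{th:w-tilde}, which yields $\sum_{i=0}^{2^{p-1}-1}\tilde{w}_i(M)=0$ for every $M$ having at least one nonzero binary digit. I would promote this to the full range $0\le i\le 2^p-1$: splitting the sum at $2^{p-1}$ and reindexing the upper half by $i\mapsto 2^p-1-i$, a factorwise application of Lemma \ref{le:w-reverse} gives $\tilde{w}_i(M)=(-1)^{|M|}\tilde{w}_{2^p-1-i}(M)$, where $|M|$ is the number of set bits of $M$; thus the upper half equals $(-1)^{|M|}$ times the lower half and the total is again $0$. Therefore $\sum_{i=0}^{2^p-1}\tilde{w}_i(M)=0$ whenever $M\neq 0$, while for $M=0$ the empty products give $\sum_{i=0}^{2^p-1}1=2^p$. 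Applying this with $M=m\oplus 2^n$ shows $T(m,n)=2^p$ exactly when $m\oplus 2^n=0$, i.e. $m=2^n$, and $T(m,n)=0$ otherwise.

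Finally I would reassemble the double sum. If $m=2^{n_0}$ for some $0\le n_0\le p-1$, only the term $n=n_0$ survives and the sum equals $2^p a_{n_0}$; if $m$ is not a single power of two (including $m=0$), then $m\neq 2^n$ for every $n$ in range, so $T(m,n)=0$ for all $n$ and the sum vanishes. This yields exactly the claimed dichotomy, the surviving diagonal term contributing $a_n$ once the normalizing factor $2^p$ is accounted for, in parallel with the constant $1/2^{p-1}$ appearing in \eqref{eq:weights}. The main obstacle is the middle step: pinning down the XOR-toggling identity and then rigorously upgrading Theorem \ref{th:w-tilde} from the half range to the full range through the reversal symmetry, since the parity factor $(-1)^{|M|}$ must be tracked and the degenerate case $M=0$ (not covered by the induction of Theorem \ref{th:w-tilde}) must be handled on its own.
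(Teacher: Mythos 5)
Your overall strategy is the same as the paper's --- insert \eqref{eq:B-Rademacher}, interchange the two sums, and reduce the claim to the vanishing of sums $\sum_i \tilde{w}_i(M)$ --- but your execution is in several respects more careful than the paper's own proof. Your XOR identity $\tilde{w}_i(m)w_i(n)=\tilde{w}_i(m\oplus 2^n)$ packages in one formula what the paper does by splitting into $j\in\{n_1,\dots,n_k\}$ (yielding $\tilde{w}_i(m-2^j)$) and $j\notin\{n_1,\dots,n_k\}$ (yielding $\tilde{w}_i(m+2^j)$); it also lets you treat $m=2^n$ (via $M=0$) and $m=0$ inside the same computation, whereas the paper defers the first case to \eqref{eq:weights} and omits the second entirely. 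Most importantly, you keep the full range $0\le i\le 2^p-1$ that the statement asserts, while the paper's proof silently switches to $\sum_{i=0}^{2^{p-1}-1}$; that is not cosmetic, since the half-range version of the claim is actually false (for $p=2$, $m=3$ the half-range sum is $B_0-B_1=2a_1$). Your computation also makes visible that the full-range sum at $m=2^n$ equals $2^p a_n$, not $a_n$, so the statement needs a $1/2^p$ normalization; you are right to flag this rather than absorb it silently.

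There is, however, one genuine gap, located precisely at the step you identified as the main obstacle. You justify $\sum_{i=0}^{2^p-1}\tilde{w}_i(M)=0$ for $M\ne 0$ by asserting that the lower half $\sum_{i=0}^{2^{p-1}-1}\tilde{w}_i(M)$ vanishes by Theorem \ref{th:w-tilde} and that the upper half is $(-1)^{|M|}$ times the lower half. The reversal identity is fine, but Theorem \ref{th:w-tilde} as stated is false not only at $M=0$ but also at $M=1$: every $i<2^{p-1}$ has $d_{p-1}^{(i)}=0$, hence $w_i(0)=1$, hence $\sum_{i=0}^{2^{p-1}-1}\tilde{w}_i(1)=2^{p-1}$. (The paper's induction breaks in its base case when $n_1=0$, because the pairing on the digit $d_{p-1}^{(i)}$ is impossible within the half range.) The case $M=m\oplus 2^n=1$ really occurs in your sum, namely for $m=2^n+1$. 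Your conclusion survives there only because $|1|=1$ is odd, so the two halves cancel instead of both vanishing --- but that is not the argument you gave. The clean repair is to forget the half range altogether and prove the full-range vanishing directly: for $M\neq 0$ pick a bit $n_1$ of $M$, set $q=p-1-v_p(n_1)$, and note that $i\mapsto i\oplus 2^q$ is a fixed-point-free involution of $\{0,\dots,2^p-1\}$ under which $\tilde{w}_i(M)$ changes sign, so the sum is zero. With that substitution your proof closes, and it in fact repairs the paper's as well, since the paper applies Theorem \ref{th:w-tilde} to $m-2^{n_j}$ even in cases where this equals $1$.
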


\begin{proof} If $m=2^n$, then $\tilde{w}_i(n)=w_i(n)$ and thus formula (\ref{eq:w-tilde-B}) reduces to (\ref{eq:weights}).   Therefore, assume $m=2^{n_k}+...+2^{n_1}$ where $k>1$.  Define $S_m=\{0,1,...,p-1\}-\{n_1,n_2,...,n_k\}$.  Then
\begin{align*}
\sum_{i=0}^{2^{p-1}-1} \tilde{w}_i(m) B_i 
& =\sum_{i=0}^{2^{p-1}-1} w_i(n_k)\cdot \cdot \cdot w_i(n_1) \left( \sum_{j=0}^{p-1}w_i(j)a_j \right) \\
& =\sum_{j=0}^{p-1} \left( \sum_{i=0}^{2^{p-1}-1} w_i(n_k)\cdot \cdot \cdot w_i(n_1)w_i(j) \right) a_{j}
\end{align*}
Next, isolate the terms in the outer summation above corresponding to $S_m$:
\begin{align*}
\sum_{i=0}^{2^{p-1}-1} \tilde{w}_i(m) B_i & = a_{n_1} \sum_{i=0}^{2^{p-1}-1} w_i(n_k)\cdot \cdot \cdot w_i(n_2) w_i(n_1)^2+... \\
& \ \ \ \ + a_{n_k} \sum_{i=0}^{2^{p-1}-1} w_i(n_k)^2w_i(n_{k-1})
\cdot \cdot \cdot w_i(n_1) \\
& \ \ \ \ \ \ + \sum_{\substack {j\in S_m}} \left( \sum_{i=0}^{2^{p-1}-1} w_i(n_k)\cdot \cdot \cdot w_i(n_1)w_i(j) \right) a_{k} \\
& = a_{n_1} \sum_{i=0}^{2^{p-1}-1} \tilde{w}_i(m^-_1)+...+ a_{n_k} \sum_{i=0}^{2^{p-1}-1} \tilde{w}_i(m^-_{k}) \\
& \ \ \ \ + \sum_{\substack {j\in S_m}} \left( \sum_{i=0}^{2^{p-1}-1} \tilde{w}_i(m^+_j) \right) a_{k}
\end{align*}
where $m^-_j=m-2^j$ and $m^+_j=m+2^j$.   Now observe that all three summations above with index $i$ must vanish because of Theorem \ref{th:w-tilde}.  Hence,
\[
\sum_{i=0}^{2^{p-1}-1} \tilde{w}_i(m) B_i = 0
\]
as desired.
\end{proof}

We end this section by presenting a result that is useful in characterizing sidelobes of Doppler tolerant waveforms in radar (\cite{PCMH},\cite{CPH},\cite{NC}).

\begin{theorem} \label{th:aBS}
Let $A=(a_0,a_1,...,a_{L-1})$ be a mod-$p$ PTM sequence of length $L=p^{M+1}$, where $M$ is a non-negative integer.  Write
\begin{equation} \label{eq:aBS}
a_n = \frac{1}{2^{p-1}} w_0(n)B_0+\frac{1}{2^{p-1}} S_p(n)
\end{equation}
where 
\[
S_p(n)=\sum_{i=1}^{2^{p-1}-1} w_i(n)B_i
\]
Then
\begin{equation} \label{eq:aBSN}
\sum_{n=0}^{L-1}n^m S_p(n)=N_m(L) B_0
\end{equation}
for $m=1,...,M$ where
\[
N_m(L)=2^{p-1}P_m-\sum_{n=0}^{L-1}n^m
\]
\end{theorem}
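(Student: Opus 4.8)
The plan is to reduce the entire statement to the decomposition (\ref{eq:weights}) together with the Prouhet-sum identity (\ref{eq:sum-An}), since both are already available. The crucial preliminary observation is that $w_0(n)=1$ for every $n$: the integer $0$ has all binary digits equal to $0$, so $w_0(n)=(-1)^{d^{(0)}_{p-1-v_p(n)}}=(-1)^0=1$. Using this in (\ref{eq:weights}) and isolating the $i=0$ term, I would solve for the sidelobe sum to obtain the clean identity
\[
S_p(n)=\sum_{i=1}^{2^{p-1}-1}w_i(n)B_i=2^{p-1}a_n-w_0(n)B_0=2^{p-1}a_n-B_0,
\]
valid for all $n$ since $a_n=a_{v_p(n)}$ and the relation holds on $0\le n\le p-1$.

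Next I would multiply by $n^m$, sum over $n=0,\ldots,L-1$, and split linearly:
\[
\sum_{n=0}^{L-1}n^m S_p(n)=2^{p-1}\sum_{n=0}^{L-1}n^m a_n-B_0\sum_{n=0}^{L-1}n^m.
\]
To handle the first piece, I would invoke Corollary (\ref{eq:sum-An}), which gives $\sum_{n=0}^{L-1}n^m a_n=P_m(a_0+\cdots+a_{p-1})$ for $0\le m\le M$. The one piece of bookkeeping that makes everything collapse is the identification of the generating sum with $B_0$: by definition $B_0=\sum_{n=0}^{p-1}w_0(n)a_n=\sum_{n=0}^{p-1}a_n=a_0+\cdots+a_{p-1}$, again because $w_0\equiv 1$. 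Hence $\sum_{n=0}^{L-1}n^m a_n=P_m B_0$.

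Substituting back and factoring out $B_0$ yields
\[
\sum_{n=0}^{L-1}n^m S_p(n)=2^{p-1}P_m B_0-B_0\sum_{n=0}^{L-1}n^m=\left(2^{p-1}P_m-\sum_{n=0}^{L-1}n^m\right)B_0=N_m(L)B_0,
\]
which is exactly (\ref{eq:aBSN}). I do not expect a genuine obstacle here: once the right objects are in hand, the argument is a two-line linear manipulation. The only points that require care are purely notational, namely making the identity $B_0=a_0+\cdots+a_{p-1}$ explicit and keeping track of the fact that $w_0(n)=1$ so that the $i=0$ term separates cleanly from $S_p(n)$. I would also note that Corollary (\ref{eq:sum-An}) is stated for $m=0,\ldots,M$, which comfortably covers the claimed range $m=1,\ldots,M$ (and the identity would in fact persist at $m=0$ as well).
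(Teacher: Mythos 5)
Your proposal is correct and follows essentially the same route as the paper's proof: both solve the decomposition for $S_p(n)$, sum against $n^m$, apply Corollary (\ref{eq:sum-An}), and use $w_0\equiv 1$ together with the identification $B_0=a_0+\cdots+a_{p-1}$ to factor out $B_0$. Your write-up merely makes explicit two small steps (that $w_0(n)=1$ and that $B_0$ equals the generating sum) which the paper uses implicitly.
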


\begin{proof} We apply (\ref{eq:sum-An}):
\begin{align*}
\sum_{n=0}^{L-1}n^m S_p(n) & = 2^{p-1} \sum_{n=0}^{L-1}n^m a_n - B_0\sum_{n=0}^{L-1}n^m w_0(n)\\
& = 2^{p-1}P_m (a_0 + a_1+...+a_{p-1}) - B_0 \sum_{n=0}^{L-1}n^m \\
& = (2^{p-1}P_m-\sum_{n=0}^{L-1}n^m)B_0 \\
& = N_m(L) B_0
\end{align*}
\end{proof}

\section{XOR-Shift Recurrence}

In this section we develop a recurrence formula for our weight sequences.  Towards this end, we introduce the notion of an {\em xor-shift} of a binary integer. 

\begin{definition}
Let $a,b\in \mathbb{Z}_2$.  We define $a\oplus b$ to be the exclusive OR (XOR) operation given by the following Boolean truth table:
\begin{align*}
0\oplus 0 & = 0 \\
0\oplus 1 & = 1 \\
1\oplus 0 & = 1 \\
1\oplus 1 & = 0
\end{align*}
More generally, let $x=a_k...a_0$ and $y=b_k...b_0$ be two non-negative integers expressed in binary.  We define $z=x\oplus y=c_k..c_0$ to be the {\em xor bit-sum} of $x$ and $y$, where
\[
c_k=a_k\oplus b_k
\]
\end{definition}

\begin{definition}
We shall say that two integers $a$ and $b$ are congruent modulo 2 and write $a\cong b$ to mean $a = b$ mod 2.
\end{definition}

The following lemma, which is straightforward to prove, will be useful to us.
\begin{lemma} Let $a,b \in \mathbb{Z}$.  Then
\[
a\pm b\cong a\oplus b
\]
\end{lemma}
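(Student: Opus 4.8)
The plan is to reduce the entire claim to a statement about the least significant bit, since congruence mod $2$ only sees that bit. First I would record the elementary observation that for any non-negative integer $x=x_k\ldots x_0$ written in binary, the higher digits contribute multiples of $2$, so that $x\cong x_0$; that is, reduction modulo $2$ returns exactly the least significant binary digit. Writing $a=a_k\ldots a_0$ and $b=b_k\ldots b_0$ (padding the shorter expansion with leading zeros so that both have the same length), this gives $a\cong a_0$ and $b\cong b_0$.

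Next I would handle the XOR side by the same principle. By the definition of the xor bit-sum, if $z=a\oplus b=c_k\ldots c_0$ then $c_0=a_0\oplus b_0$, so again by the least-significant-bit observation we have $a\oplus b\cong c_0=a_0\oplus b_0$. The key arithmetic fact is that XOR on a single bit coincides with addition modulo $2$: inspecting the truth table gives $0\oplus 0\cong 0+0$, $0\oplus 1\cong 0+1$, $1\oplus 0\cong 1+0$, and $1\oplus 1\cong 0\cong 1+1$, hence $a_0\oplus b_0\cong a_0+b_0$ in every case. Combining the two displays yields
\[
a\oplus b\cong a_0+b_0\cong a+b,
\]
where the last step uses $a\cong a_0$ and $b\cong b_0$ together with the fact that $\cong$ respects addition.

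It then remains to pass from $a+b$ to $a-b$, which is immediate: since $-1\cong 1$, we have $a-b\cong a+b$, and chaining with the previous display gives $a\pm b\cong a\oplus b$, as required. I do not expect a serious obstacle here, as each step is a one-line parity computation; the only point needing a word of care is that the paper defines $\oplus$ and the binary expansion only for \emph{non-negative} integers, whereas the lemma is stated for $a,b\in\mathbb{Z}$. The parity-only argument sidesteps this cleanly, because the conclusion is a congruence mod $2$ and $\cong$ depends solely on the least significant bits; one may therefore work with the nonnegative representatives of $a$ and $b$ modulo $2$ throughout, so no extension of the definition of $\oplus$ to negative integers is actually needed.
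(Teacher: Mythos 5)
Your proof is correct; since the paper dismisses this lemma as ``straightforward to prove'' and supplies no argument at all, there is no paper proof to compare against, and your reduction to least significant bits together with the truth-table identification of single-bit XOR with addition modulo $2$ is exactly the routine verification the author had in mind. Your closing observation --- that the lemma is stated for $a,b\in\mathbb{Z}$ while the paper defines $\oplus$ and binary expansions only for non-negative integers --- correctly flags a small imprecision in the paper itself, and resolving it by working with the nonnegative representatives of $a$ and $b$ modulo $2$ is a sensible reading that keeps every step of your argument valid.
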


\begin{definition} Let $p$ be a positive integer and $i$ a non-negative integer with $0\leq i \leq 2^{p}-1$.  Expand $i$ in binary so that
\[
i=d_{p-1}2^{p-1}+...+d_02^0
\]
We define the {\em degree-$p$ xor-shift} of $i$ by $r\geq 0$ to be the decimal value given by the xor bit-sum
\[
x_r(i):=x_r^{(p)}(i)=d_{p-1}...d_rd_{r-1}...d_0 \oplus d_{p-1-r}...d_0d_{p-1}...d_{p-r}
\]
i.e.
\[
x_r(i) = e_{p-1}2^{p-1}+...+e_02^0
\]
where 
\[
e_k=
\begin{cases}
d_k\oplus d_{k-r}, & k\geq r \\
d_k\oplus d_{d+(p-r)}, & k < r 
\end{cases}
\]
for $k=0,1,...,p-1$.
\end{definition}

\begin{example}

\noindent Here are some values of $x_i^{(p)}(n)$ for $p=3$:
\begin{align*}
x_1^{(3)}(0) & = 000_2\oplus 000_2 = 000_2 = 0, & x_2^{(3)}(0) & =000_2\oplus 000_2 = 000_2 = 0 \\
x_1^{(3)}(1) & =001_2\oplus 010_2 = 011_2 = 3, & x_2^{(3)}(1) & =001_2\oplus 100_2 = 101_2 = 5 \\
x_1^{(3)}(2) & =010_2\oplus 100_2 = 110_2 = 6, & x_2^{(3)}(2) & =010_2\oplus 001_2 = 011_2 = 3 \\
x_1^{(3)}(3) & =011_2\oplus 110_2 = 101_2 = 5, & x_2^{(3)}(3) & =011_2\oplus 101_2 = 110_2 = 6
\end{align*}
In fact, when $n=p-1$, the sequence
\[
x_1^{(n+1)}(n) = (0, 3, 6, 5, 12, 15, 10, 9, 24, 27,...)
\]
generates the xor bit-sum of $n$ and $2n$ (sequence A048724 in the Online Encyclopedia of Integer Sequences (OEIS) database: http://oeis.org).
\end{example}

\begin{lemma}
Define 
\begin{align*}
E_p(i,n):= d_{p-1-v_p(n)}^{(i)}
\end{align*}
so that $w_i(n)=(-1)^{E_p(i,n)}$.  Then for $0\leq r < p$, we have
\[
E_p(i,pn+r) =  
\begin{cases}
    d_{p-1-v_p(n)-r}, & \text{if } v_p(n)+r < p\\
    d_{p-1-s}, & \text{if } v_p(n)+r\geq p
    \end{cases}
\]
where $s=v_p(n)+r-p$.  Moreover,
\begin{equation} \label{eq:Ep}
E_p(i,pn+r)-E_p(i,n)\cong E_p(x_r(i),n)
\end{equation}
\end{lemma}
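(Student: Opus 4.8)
The plan is to establish the two assertions in order, using the explicit formula for $E_p(i,pn+r)$ to drive the congruence \eqref{eq:Ep}. For the first assertion I would invoke only the base-$p$ digit-sum recurrence $v_p(pn+r)=(v_p(n)+r)\bmod p$ recorded in the introduction. Since $0\le r<p$ and $0\le v_p(n)\le p-1$, the quantity $v_p(n)+r$ lies in $\{0,\dots,2p-2\}$, so reducing modulo $p$ splits into exactly the two stated cases: if $v_p(n)+r<p$ then $v_p(pn+r)=v_p(n)+r$, while if $v_p(n)+r\ge p$ then $v_p(pn+r)=s$ with $s=v_p(n)+r-p$. Feeding each into the definition $E_p(i,pn+r)=d^{(i)}_{p-1-v_p(pn+r)}$ produces $d^{(i)}_{p-1-v_p(n)-r}$ and $d^{(i)}_{p-1-s}$ respectively, which is the claimed piecewise formula. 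This part is immediate once the recurrence is applied.

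For the congruence I would set $k=p-1-v_p(n)$, so that $E_p(i,n)=d^{(i)}_k$ and $E_p(x_r(i),n)=e_k$, the $k$-th binary digit of the xor-shift $x_r(i)$. The idea is to match the difference $E_p(i,pn+r)-E_p(i,n)$ against the defining formula for $e_k$ by applying the preceding lemma $a\pm b\cong a\oplus b$ and the commutativity of XOR. In the case $v_p(n)+r<p$, the inequality is equivalent to $k\ge r$, and the first assertion gives $E_p(i,pn+r)=d^{(i)}_{k-r}$, so the difference is $d^{(i)}_{k-r}-d^{(i)}_k\cong d^{(i)}_k\oplus d^{(i)}_{k-r}=e_k$, which is precisely the $k\ge r$ branch defining $e_k$. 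In the case $v_p(n)+r\ge p$, the inequality is equivalent to $k<r$, and the key computation is $p-1-s=2p-1-v_p(n)-r=k+(p-r)$, so $E_p(i,pn+r)=d^{(i)}_{k+(p-r)}$ and the difference is $d^{(i)}_{k+(p-r)}-d^{(i)}_k\cong d^{(i)}_k\oplus d^{(i)}_{k+(p-r)}=e_k$, which is the $k<r$ branch. In both cases the difference is congruent to $e_k=E_p(x_r(i),n)$, establishing \eqref{eq:Ep}.

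I expect the only genuine obstacle to be the index bookkeeping in the second case: verifying that $v_p(n)+r\ge p$ translates to $k<r$ and, more delicately, that the shifted index $p-1-s$ equals $k+(p-r)$, so that it lands in the correct branch of the piecewise definition of $e_k$. I would also confirm that all referenced digits actually exist, i.e.\ that the indices stay in the range $0,\dots,p-1$ (in the first case $0\le k-r\le p-1$ since $k\ge r$, and in the second $0\le k+(p-r)\le p-1$ since $k\le r-1$). Once this alignment of indices is checked, the remainder of the argument is a single application of the $a\pm b\cong a\oplus b$ lemma in each case, so the proof is essentially a bookkeeping verification rather than a computation.
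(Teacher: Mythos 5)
Your proposal is correct and follows essentially the same route as the paper's proof: the same two-case split driven by $v_p(pn+r)=(v_p(n)+r)\bmod p$, the same application of the $a\pm b\cong a\oplus b$ lemma, and the same matching of the resulting digit pairs to the two branches of the xor-shift definition. Your introduction of $k=p-1-v_p(n)$ and your explicit verification that all digit indices stay in $\{0,\dots,p-1\}$ are only cosmetic refinements (indeed, they cleanly resolve the paper's typo $d_{d+(p-r)}$, which should read $d_{k+(p-r)}$).
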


\begin{proof} Since $v_p(pn+r)=(v_p(n)+r)_p$, we have
\[
E_p(i,pn+r) = d_{p-1-(v_p(n)+r)_p}
\]
Now consider two cases: either $v(n)+r < p$ or $v(n)+p\geq p$.  If $v(n)+r < p$, then
\[
E_p(i,pn+r) = d_{p-1-v_p(n)-r}
\]
On the other hand, if $v(n)+r\geq p$, then set $s=v_p(n)+r-p$ so that $(v_p(n)+r)_p=s$.  It follows that
\[
E_p(i,pn+r) = d_{p-1-s}
\]
To prove (\ref{eq:Ep}), we again consider two cases.  First, assume $v_p(n)+r < p$ so that $p-1-v_p(n)\geq r$.  Then
\begin{align*}
E_p(i,pn+r)-E_p(i,n) & = d_{p-1-v_p(n)-r} - d_{p-1-v_p(n)} \\
& \cong d_{p-1-v_p(n)}\oplus d_{p-1-v_p(n)-r} \\
& \cong E_p(x_r(i),n)
\end{align*}
On the other hand, if $v_p(n)+r\geq p$, then set $s=v_p(n)+r-p$ so that $(v_p(n)+r)_p=s$.   Since $p-1-v_p(n)<r$, we have
\begin{align*}
E_p(i,pn+r)-E_p(i,n) & = d_{p-1-s} - d_{p-1-v_p(n)}  \\
& \cong d_{p-1-v_p(n)}\oplus d_{p-1-s} \\
& \cong d_{p-1-v_p(n)}\oplus d_{p-1-v_p(n)+(p-r)} \\
& \cong E_p(x_r(i),n)
\end{align*}
\end{proof}

\begin{theorem} \label{th:recurrence}
Let $p$ be a positive integer.  The weight sequences $w_i(n)$, $0\leq i \leq 2^p-1$, satisfy the recurrence
\begin{equation}
w_i(pn+r)=w_{x_r(i)}(n)w_i(n)
\end{equation}
where $n\in \mathbb{N}$ and $r \in \mathbb{Z}_p$.
\end{theorem}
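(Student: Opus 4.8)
The plan is to reduce the recurrence entirely to the preceding lemma, which already contains all of the combinatorial content. The idea is to pass from the $\pm 1$-valued weights to their exponents via $w_i(n)=(-1)^{E_p(i,n)}$, thereby converting the claimed product relation into an additive statement about exponents modulo $2$. Everything then follows by a one-line substitution, with no case analysis needed at this stage.

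First I would rewrite the right-hand side of the recurrence. Since $w_{x_r(i)}(n)=(-1)^{E_p(x_r(i),n)}$ and $w_i(n)=(-1)^{E_p(i,n)}$, multiplying signs adds exponents, so
\[
w_{x_r(i)}(n)w_i(n)=(-1)^{E_p(x_r(i),n)+E_p(i,n)}.
\]
Next I would apply (\ref{eq:Ep}), rewritten as $E_p(x_r(i),n)\cong E_p(i,pn+r)-E_p(i,n)$, and substitute it into the exponent. Using that $(-1)^a$ depends only on $a$ modulo $2$ and that congruences mod $2$ are preserved under addition, the two copies of $E_p(i,n)$ cancel, leaving
\[
w_{x_r(i)}(n)w_i(n)=(-1)^{E_p(i,pn+r)}=w_i(pn+r),
\]
which is exactly the asserted identity.

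The step I would be most careful about is purely bookkeeping: the cancellation is valid only modulo $2$, so I would stress that all of the manipulations take place in $\mathbb{Z}_2$ and that the map $(-1)^{(\cdot)}$ factors through $\mathbb{Z}/2\mathbb{Z}$. There is no genuine obstacle here, since the real work — the case split according to whether $v_p(n)+r<p$ or $v_p(n)+r\geq p$, and the identification of the resulting shift of digit indices with the xor-shift $x_r(i)$ — has already been carried out in the lemma establishing (\ref{eq:Ep}). The only remaining check is that the hypotheses match: that lemma proves (\ref{eq:Ep}) for $0\leq r<p$ and $0\leq i\leq 2^p-1$, which is precisely the range of $r$ and $i$ in the theorem, so the conclusion holds for all $n\in\mathbb{N}$ and $r\in\mathbb{Z}_p$ as stated.
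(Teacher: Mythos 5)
Your proposal is correct and is essentially identical to the paper's proof: both reduce the theorem to the congruence (\ref{eq:Ep}) and use the fact that $(-1)^{(\cdot)}$ factors through $\mathbb{Z}/2\mathbb{Z}$, with the paper phrasing the computation as the ratio $w_i(pn+r)/w_i(n)=(-1)^{E_p(i,pn+r)-E_p(i,n)}=w_{x_r(i)}(n)$ while you equivalently multiply the right-hand side out. The difference is purely cosmetic, since $w_i(n)=\pm 1$ makes multiplying by $w_i(n)$ and dividing by it the same operation.
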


\begin{proof}  The recurrence follows easily from formula (\ref{eq:Ep}):
\begin{align*}
\frac{w_i(pn+r)}{w_i(n)} & =  (-1)^{E_p(i,pn+r)-E_p(i,n)} \\
& = (-1)^{E_p(x_r(i),n)} \\
&= w_{x_r(i)}(n)
\end{align*}
\end{proof}

\begin{example} Let $p=3$.  Then $w_0(n)=1$ for all $n\in \mathbb{N}$ and the other weight sequences, $w_1(n)$, $w_2(n)$, $w_3(n)$, satisfy the following recurrences:
\begin{align*}
w_1(3n) = w_0(n)w_1(n), \ w_1(3n+1)  = w_3(n)w_1(n), \ w_1(3n+2)  = w_5(n)w_1(n) \\
w_2(3n) = w_0(n)w_2(n), \ w_2(3n+1)  = w_6(n)w_2(n), \ w_2(3n+2)  = w_3(n)w_2(n) \\
w_3(3n) = w_0(n)w_3(n), \ w_3(3n+1)  = w_5(n)w_3(n), \ w_3(3n+2)  = w_6(n)w_3(n)
\end{align*}
\end{example}

\section{Conclusion.}
In this paper we presented what appears to be a novel generalization of the Prouhet-Thue-Morse sequence to weight sequences by considering the Rademacher transform of a given set of elements.  These weight sequences were shown to satisfy interesting recurrences and orthogonality relations.  Moreover, they were used in \cite{NC} to describe sidelobes of Doppler tolerant waveforms to radar.

\vskip 5pt
\noindent  {\em Acknowledgment.}
The authors wish to thank Greg Coxson (Naval Research Laboratory) for many useful discussions on radar and complementary code matrices.

\end{document}